\newtheorem{theo}{Theorem}
\newtheorem{note}{Note}
\newcommand{\be}{\begin{equation}}
\newcommand{\ee}{\end{equation}}
\newcommand{\beas}{\begin{eqnarray*}}
\newcommand{\eeas}{\end{eqnarray*}}
\newcommand{\bea}{\begin{eqnarray}}
\newcommand{\eea}{\end{eqnarray}}
\begin{document}
\title[A simple proof of the Fundamental Theorem of Algebra]{A simple proof of the  Fundamental Theorem of Algebra}
\date{}
\author[B. Chakraborty ]{Bikash Chakraborty}
\date{}
\address{Department of Mathematics, Ramakrishna Mission Vivekananda
Centenary College, Rahara, West Bengal 700 118, India.}
\email{bikashchakraborty.math@yahoo.com, bikashchakrabortyy@gmail.com}
\maketitle
\let\thefootnote\relax
\footnotetext{2020 Mathematics Subject Classification:  30D20.}
\footnotetext{Key words and phrases: Fundamental Theorem of Algebra, Cauchy's integral theorem.}
%%%%%%%%%%%%%%%%%%%%%%%%%%%%%%%%%%%%%%%%%%%%%%%%%%%%%%%%%%%%%%%%%%%%%%%%%%%%%%%%%%%%%%%%%%%%%%%%%%%%%%%%%%%%%%%%%%%%%%%%%%%%%%%%%%%%%%%%%%%%%%%%%%%%%%%%%%%%%%%%%%%%%%%%%
\section*{Fundamental Theorem of Algebra}
Many proofs of the Fundamental Theorem of Algebra, including various proofs based on the theory of analytic functions of a complex variable, are known. To the best of our knowledge, this proof is different from the existing ones.
\begin{theo}
Every polynomial $p(z)=z^{n}+a_{1}z^{n-1}+a_{2}z^{n-2}+\ldots+a_{n-1}z+a_{n}\in\mathbb{C}[z]$, where $n\in\mathbb{N}$, has a zero in $\mathbb{C}$.
\end{theo}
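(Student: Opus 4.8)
The plan is to argue by contradiction, converting the growth of a monic polynomial into an impossible bound via Cauchy's integral formula (itself a consequence of Cauchy's integral theorem). Suppose $p$ has no zero in $\mathbb{C}$. If $a_n=0$ then $p(0)=0$, contrary to assumption; so $p(0)=a_n\neq 0$, and the reciprocal $f(z)=1/p(z)$ is a well-defined entire function with $f(0)=1/a_n\neq 0$.

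For an arbitrary radius $R>0$, I would apply Cauchy's integral formula to $f$ on the circle $\gamma_R:\ |z|=R$, obtaining
\[
\frac{1}{a_n}=f(0)=\frac{1}{2\pi i}\oint_{\gamma_R}\frac{f(z)}{z}\,dz=\frac{1}{2\pi i}\oint_{\gamma_R}\frac{dz}{z\,p(z)}.
\]
The key step is an elementary lower estimate for $|p|$ on $\gamma_R$: writing $p(z)=z^{n}\bigl(1+a_1/z+\cdots+a_n/z^{n}\bigr)$, one checks that once $R\ge\max\{1,\,2(|a_1|+\cdots+|a_n|)\}$ the bracketed factor has modulus at least $1/2$, whence $|p(z)|\ge R^{n}/2$ for every $z$ on $\gamma_R$. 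The standard length-times-sup estimate for a contour integral then gives
\[
\left|\frac{1}{a_n}\right|=\frac{1}{2\pi}\left|\oint_{\gamma_R}\frac{dz}{z\,p(z)}\right|\le\frac{1}{2\pi}\cdot 2\pi R\cdot\frac{1}{R}\cdot\frac{2}{R^{n}}=\frac{2}{R^{n}}.
\]

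Letting $R\to\infty$ forces $1/|a_n|\le 0$, which is absurd; therefore $p$ must vanish somewhere in $\mathbb{C}$. I expect essentially no obstacle here: the only points demanding care are the explicit threshold on $R$ securing $|p(z)|\ge R^{n}/2$, and the preliminary remark disposing of the degenerate case $a_n=0$ so that division by $p(0)$ is legitimate. (An equivalent route, if one prefers to invoke Cauchy's integral theorem directly rather than the integral formula, is to observe that $\oint_{\gamma_R}z^{n-1}/p(z)\,dz=0$ for all $R$ since the integrand would be entire, while the same estimates show this integral tends to $2\pi i$ as $R\to\infty$.)
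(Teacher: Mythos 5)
Your argument is correct: the threshold $R\ge\max\{1,\,2(|a_1|+\cdots+|a_n|)\}$ does force the bracketed factor to have modulus at least $1/2$, hence $|p|\ge R^n/2$ on $\gamma_R$, and the length-times-sup bound then contradicts the nonzero constant value $2\pi i/a_n$ that Cauchy's formula assigns to $\int_{\gamma_R}\frac{dz}{z\,p(z)}$ for every $R$. (The preliminary case $a_n=0$ is not even needed as a separate step, since the hypothesis that $p$ never vanishes already gives $p(0)\ne 0$.) However, your route is the classical textbook one, not the paper's. You fix the polynomial and let the contour expand, killing the integral by an explicit decay estimate $O(R^{-n})$. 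The paper instead fixes the circle $|z|=r$ and rescales the polynomial, setting $p_k(z)=z\,p(kz)/k^n$; the soft fact that $p_k\to z^{n+1}$ uniformly on the fixed circle (hence $1/p_k\to 1/z^{n+1}$ uniformly, since $z^{n+1}$ is bounded away from $0$ there) gives $\int_{|z|=r}dz/p_k\to\int_{|z|=r}dz/z^{n+1}=0$, while Cauchy's integral formula evaluates each $\int_{|z|=r}dz/p_k$ exactly as $2\pi i\,k^n/p(0)$, which diverges. Under the substitution $w=kz$ the two computations are essentially transforms of one another, but the mechanisms differ: your proof trades on an explicit lower bound for $|p|$ on large circles (a quantitative Liouville-type estimate at the origin), whereas the paper replaces that estimate by uniform convergence on a single fixed contour plus the elementary vanishing of $\int_{|z|=r}z^{-(n+1)}dz$. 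Yours is more self-contained and standard; the paper's is a deliberate repackaging whose point is precisely to avoid the expanding-contour estimate.
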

\begin{proof}
For a contradiction, let us assume that $p(z)$ has  no zero in $\mathbb{C}$. Thus the polynomials
of the sequence  $\{p_{k}(z)\}_{k=1}^{\infty}$, given by
  \beas p_{k}(z):=\frac{z \cdot p(kz)}{k^n}=z^{n+1}+\frac{a_{1}}{k}z^{n}+\frac{a_{2}}{k^2}z^{n-1}+\ldots+\frac{a_{n-1}}{k^{n-1}}z^2+\frac{a_{n}}{k^n}z,\eeas
 have only  simple zero at $0$. Next we choose an arbitrary but fixed real number $ r(>0)$. Since $z^{n+1}$ is non-zero on $\mid z\mid=r$, thus there exists a $\delta>0$ such that $\mid z^{n+1} \mid> \delta$ for every $z$ where $\mid z\mid=r$. Moreover, as $\{p_{k}(z)\}_{k=1}^{\infty}$ converges  uniformly to $z^{n+1}$ on $\mid z\mid=r$, so, there exists $N\in\mathbb{N}$ such that  $\mid p_{k}(z)\mid> \frac{\delta}{2}$ for every $k\geq N$ and for every $z$ where $\mid z\mid=r$. Thus $\left\{\frac{1}{p_{k}(z)}\right\}_{k=1}^{\infty}$ converges uniformly to $\frac{1}{z^{n+1}}$ on $\mid z\mid=r$. Hence,   \bea\label{2} \lim_{k\to\infty}\int_{\mid z\mid=r} \frac{1}{p_{k}(z)}dz=\int_{\mid z\mid=r} \frac{1}{z^{n+1}}dz.\eea
Now, using  Cauchy's integral formula, we get \bea\label{2} \lim_{k\to\infty} \frac{2\pi i\cdot k^{n}}{p(0)}=\int_{\mid z\mid=r} \frac{1}{z^{n+1}}dz,\eea
i.e., \bea\label{2} \lim_{k\to\infty}  k^{n}=0,\eea
which is impossible. Thus $p(z)$ has a zero in $\mathbb{C}$.
\end{proof}
\begin{note}
\beas\label{3} \int_{\mid z\mid=r} \frac{1}{z^{n+1}}dz=\frac{i}{r^n}\int_{0}^{2\pi}e^{-in\theta}d\theta=0. \eeas
\end{note}


\begin{thebibliography}{99}
\bibitem{1} John B. Conway. Functions of One Complex Variable I. Springer-Verlag, New York, New York, 1978.
\bibitem{2} Bikash Chakraborty, A simple proof of the fundamental theorem of algebra, Math. Student, 85 (2016), no. 3-4, 103-104.
\end{thebibliography}
\end{document}